\newtheorem{lemma}{Lemma}
\newtheorem{theorem}{Theorem}
\numberwithin{equation}{section}
\begin{document}

\title{Weighted Approximation theorem for Choldowsky generalization of the q-Favard-Sz\'{a}sz operators}
\maketitle
\begin{center}
{\bf Preeti Sharma$^{1,\star}$, Vishnu Narayan Mishra$^{1,2,\dag}$  }\\ 
$^{1}$Department of Applied Mathematics and Humanities,
Sardar Vallabhbhai National Institute of Technology,
Ichchhanath Mahadev Dumas Road, Surat, Surat-395 007 (Gujarat), India.\\
$^{2}$L. 1627 Awadh Puri Colony Beniganj,
Phase -III, Opposite - Industrial Training Institute (I.T.I.),\\ Ayodhya Main Road,
Faizabad-224 001, (Uttar Pradesh), India.
\end{center}
\begin{center}
$^\star$Email: preeti.iitan@gmail.com\\
$^\dag$Email: vishnunarayanmishra@gmail.com,vishnu\_narayanmishra@yahoo.co.in, v\_n\_mishra\_hifi@yahoo.co.in, vishnu\_narayanmishra@rediffmail.com
\end{center}
\footnotetext[2]{Corresponding author}

\begin{abstract}
we study the convergence of these operators in a weighted space of functions on a positive semi-axis and estimate the approximation by using a new type of weighted modulus of continuity and error estimation.

Keywords: $q$-Favard- Sz\'{a}sz operators, error estimation.\\

\end{abstract}
\maketitle

\section{Introduction and auxiliary results}
The classical Favard--Sz\'{a}sz--perators are given as follows
\begin{equation}
S_n(f,;)= e^{-n x}\sum_{k=0}^{\infty} \frac{(n x)^k}{k!} f(\frac{k}{n})
\end{equation}
These operators and the generalizations have been studied by several other researcher  ( see. \cite{gal2008}-\cite{otto1950}) and references there in. In 1969, Jakimovski and Leviatan \cite{Jakimovski} introduced the Favard-Sz\'{a}sz type operator, by using Appell polynomials $p_k (x)(k \geq 0)$ defined by
$$ g(u)e^{-ux}=\sum_{k=0}^{\infty}p_k(x) u^{k},$$
where $g(z)=\sum_{n=0}^{\infty} a_n z^{n}$ is analytic function in the disc $|z|<R, R>1$ and $g(1)\neq 0,$
$$P_{n,t}(f,x)=\frac{e^{nx}}{g(1)}\sum_{k=0}^{\infty}p_k(nx)f\big(\frac{k}{n}\big)$$
and they investigated some approximation properties of these operators.\\
Atakut at el.\cite{Atakut2014} defined a choldowsky type of Favard--Sz\'{a}sz operators as follows:

\begin{equation}\label{nit1}
P_{n}^{*}(f,x)=\frac{e^{\frac{nx}{b_n}}}{g(1)} \sum_{k=0}^{\infty}p_k(\frac{nx}{b_n})f\big(\frac{k}{n} b_n\big),
\end{equation}
with $b_n$ a positive increasing sequence with the properties $\lim\limits_{n\rightarrow\infty} b_n =\infty$ and $\lim\limits_{n\rightarrow 
\infty}\frac{b_n}{n} = 0.$ They also studied some approximation properties of the operators.\\
Recently,  A. Karaisa \cite{Karaisa} defined Choldowsky type generalization of the Favard-Sz\'{a}sz operators as follows:
\begin{equation}\label{nit2}
P_{n}^{*}(f;q;x)=\frac{E_{q}^{\frac{[n]_q x}{b_n}}}{A(1)} \sum_{k=0}^{\infty} \frac{P_k(q; \frac{[n]_q x}{b_n})}{ [k]_q!} f\big(\frac{[k]_q}{[n]_q} b_n\big),
\end{equation}
where $\{P_k(q;.)\}\geq 0$ is a $q$-Appell polynomial set which is generated by

\begin{equation*}
A(u ) \frac{e^{\frac{[n]_q x}{b_n}}}u =  \sum_{k=0}^{\infty} \frac{P_k(q; \frac{[n]_q x}{b_n}) u^k}{ [k]_q!}
\end{equation*}
and $A(u)$ is defined by $A(u)=\sum_{k=0}^{\infty} a_k u^k.$\\
Motivated by these results, in this paper we study weighted approximation and error estimation of these operators.\\ 
During the last two decades, the applications of $q$-calculus emerged as a new area in the field of approximation theory. The rapid development of $q$-calculus has led to the discovery of various generalizations of Bernstein polynomials involving $q$-integers. The aim of these generalizations is to provide appropriate and powerful tools to application areas such as numerical analysis, computer-aided geometric design and solutions of differential equations.\\
To make the article self-content, here we mention certain basic definitions of $q$-calculus,
details can be found in \cite{KC} and the other recent articles. For each non negative
integer $n$, the $q$-integer $[n]_q$ and the $q$-factorial $[n]_q!$ are, respectively, defined by
\begin{equation*} \displaystyle [n]_q = \left\{ \begin{array}{ll} \frac{1-q^n}{1-q}, & \hbox{$q\neq1$}, \\
n,& \hbox{$q=1$}, \end{array} \right.
 \end{equation*}
and

\begin{eqnarray*}
[n]_q!=\left\{
\begin{array}{ll}
[n]_q[n-1]_q[n-2]_q...[1]_q,  & \hbox{$n=1,2,...$},\\
1,& \hbox{$n=0$.}
\end{array}\right. 
\end{eqnarray*}
Then for $q >0$ and integers $n, k, k \geq n \geq 0$, we have\\
$$[n+1]_{q}=1+q[n]_q  ~~~~{ \text {  and }}~~~ [n]_q+q^n[k-n]_q=[k]_q.$$
The $q$-derivative $D_qf$ of a function $f$ is defined by
$$(D_qf)(x)=\frac{f(x)-f(qx)}{(1-q)x}, x\neq 0.$$
The $q$-analogues of the exponential function are given by
$$e^{x}_q=\sum_{n=0}^{\infty} \frac{x^n}{[n]_q!},$$
and
$$E^{x}_q=\sum_{n=0}^{\infty} q^\frac{n(n-1)}{2} \frac{x^n}{[n]_q!}.$$ 
The exponential functions have the following properties:\\
$$D_q(e_{q}^{ax})=ae_{q}^{ax},~ D_q(E_{q}^{ax})=a E_{q}^{aqx},~ e_{q}^{x} E_{q}^{-x}= E_{q}^{x} e_{q}^{-x} =1.$$

\begin{lemma}\label{L1}
\cite{Karaisa} The following hold:\\
\begin{itemize}
\item[(i)] $P^{*}_{n}(e_0;q;x)=1,$
\item[(ii)] $P^{*}_{n}(e_1;q;x)=x + \frac{D_q(A(1)) E_{q}^{-\frac{[n]_q x}{b_n}} e_{q}^{ q \frac{[n]_q x}{b_n}}}{A(1)} \frac{b_n}{[n]_q},$
\item[(iii)] $P^{*}_{n}(e_2;q;x)= x^2 + \frac{ E_{q}^{-\frac{[n]_q x}{b_n}} e_{q}^{ q \frac{[n]_q x}{b_n}} [q D_q(A(q))+D_q(A(1)]}{A(1)} \frac{b_n x}{[n]_q} + \frac{D_q^2(A(1)) E_{q}^{-\frac{[n]_q x}{b_n}} e_{q}^{ q \frac{[n]_q x}{b_n}}}{A(1)} \frac{b_n^2}{[n]_q^2}.$
\end{itemize}
where $e_i(x)=x^i, i=0,1,2.$
\end{lemma}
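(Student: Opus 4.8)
\medskip
\noindent\textit{Sketch of proof.} Write $t=t_n(x):=\dfrac{[n]_q x}{b_n}$ and recall the defining relation of the $q$-Appell set in the form
\[
G(u):=A(u)\,e_q^{tu}=\sum_{k=0}^{\infty}\frac{P_k(q;t)}{[k]_q!}\,u^{k};
\]
then the outer constant of $P_n^{*}$ equals $\dfrac{1}{A(1)e_q^{t}}=\dfrac{E_q^{-t}}{A(1)}$ by the reciprocity $e_q^{s}E_q^{-s}=1$. The plan is to reduce everything to the three sums $M_i:=\sum_{k\ge0}\dfrac{[k]_q^{i}}{[k]_q!}\,P_k(q;t)$, $i=0,1,2$, because $P_n^{*}(e_i;q;x)=\dfrac{E_q^{-t}}{A(1)}\Big(\dfrac{b_n}{[n]_q}\Big)^{i}M_i$; each $M_i$ is then read off from $G$ using only $D_{q,u}u^{k}=[k]_q\,u^{k-1}$, namely $M_0=G(1)$, $M_1=(D_{q,u}G)(1)$, and, after writing $[k]_q=1+q[k-1]_q$, $M_2=(D_{q,u}G)(1)+q\,(D_{q,u}^{2}G)(1)$.

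For (i) I set $u=1$: $M_0=A(1)e_q^{t}$, so $P_n^{*}(e_0;q;x)=E_q^{-t}e_q^{t}=1$. For (ii) I compute $D_{q,u}G$ by the $q$-product rule $D_q(fg)(u)=f(u)D_qg(u)+g(qu)D_qf(u)$ with $f=A$, $g(u)=e_q^{tu}$ and $D_{q,u}e_q^{tu}=t\,e_q^{tu}$, obtaining $D_{q,u}G(u)=t\,A(u)e_q^{tu}+e_q^{tqu}D_qA(u)$ and hence $M_1=t\,A(1)e_q^{t}+e_q^{qt}D_qA(1)$; multiplying by $\dfrac{E_q^{-t}}{A(1)}\cdot\dfrac{b_n}{[n]_q}$ and using $t\,\dfrac{b_n}{[n]_q}=x$ and $E_q^{-t}e_q^{t}=1$ yields (ii). For (iii) I differentiate once more: $D_{q,u}\big(t\,A(u)e_q^{tu}\big)=t\,D_{q,u}G(u)$, while the same product rule applied to $e_q^{tqu}D_qA(u)$ (with $f(u)=e_q^{tqu}$, $g=D_qA$, so the dilation $u\mapsto qu$ acts on $D_qA$) gives $D_{q,u}^{2}G(u)=t^{2}A(u)e_q^{tu}+t\,e_q^{tqu}D_qA(u)+e_q^{tqu}D_q^{2}A(u)+tq\,e_q^{tqu}(D_qA)(qu)$. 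Evaluating at $u=1$ (the dilation now contributes the value $D_qA(q)$), assembling $M_2$, multiplying by $\dfrac{E_q^{-t}}{A(1)}\cdot\dfrac{b_n^{2}}{[n]_q^{2}}$ and simplifying with $t^{2}\,\dfrac{b_n^{2}}{[n]_q^{2}}=x^{2}$, $t\,\dfrac{b_n^{2}}{[n]_q^{2}}=\dfrac{b_n x}{[n]_q}$ and $E_q^{-t}e_q^{t}=1$ produces (iii).

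The one genuinely delicate point is the second $q$-differentiation in part (iii): the $q$-product rule is not symmetric in $f$ and $g$, so one must be careful which factor receives the dilation $u\mapsto qu$, and it is exactly this dilation --- acting on $D_qA$ and then evaluated at $u=1$ --- that is responsible for $D_q(A(q))$ appearing next to $D_q(A(1))$ and $D_q^{2}(A(1))$. Parts (i)--(ii) are essentially immediate; once the dilations are accounted for consistently and every $q$-exponential is kept in a shape that collapses through $e_q^{s}E_q^{-s}=1$, (iii) follows by routine $q$-integer bookkeeping. \hfill$\square$
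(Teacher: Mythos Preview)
The paper does not supply its own proof of this lemma; it simply records the identities and attributes them to \cite{Karaisa}. Your approach --- expressing the moments $P_n^{*}(e_i;q;x)$ as $\dfrac{E_q^{-t}}{A(1)}\Big(\dfrac{b_n}{[n]_q}\Big)^{i}M_i$ with $M_i$ read off from successive $q$-derivatives of the generating function $G(u)=A(u)e_q^{tu}$ at $u=1$, using the recursion $[k]_q=1+q[k-1]_q$ for $M_2$, and collapsing the exponentials through $e_q^{s}E_q^{-s}=1$ --- is precisely the standard computation underlying such formulas and is the method one would find in the cited source. Your handling of the asymmetric $q$-Leibniz rule in the second differentiation, which forces the dilation onto $D_qA$ and thereby produces the $D_qA(q)$ contribution in (iii), is the key non-routine step, and you identify it correctly.
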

Now we give an auxiliary lemma for the Korovkin test functions.

\begin{lemma}\label{L2}
\begin{itemize}
\item[(i)] $P^{*}_{n}(t-x ;q;x)= \frac{D_q(A(1)) E_{q}^{-\frac{[n]_q x}{b_n}} e_{q}^{ q \frac{[n]_q x}{b_n}}}{A(1)} \frac{b_n}{[n]_q},$
\item[(iii)] $P^{*}_{n}((t-x)^2 ;q;x)= \frac{ E_{q}^{-\frac{[n]_q x}{b_n}} e_{q}^{ q \frac{[n]_q x}{b_n}} [q D_q(A(q))-D_q(A(1)]}{A(1)} \frac{b_n x}{[n]_q} + \frac{D_q^2(A(1)) E_{q}^{-\frac{[n]_q x}{b_n}} e_{q}^{ q \frac{[n]_q x}{b_n}}}{A(1)} \frac{b_n^2}{[n]_q^2} .$
\end{itemize}
\end{lemma}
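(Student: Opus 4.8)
The plan is to use the linearity of the operator $P^{*}_{n}(\cdot\,;q;x)$ together with the explicit values of $P^{*}_{n}(e_i;q;x)$ for $i=0,1,2$ recorded in Lemma \ref{L1}. Since $t-x$ and $(t-x)^2$ are polynomials in $t$ of degrees $1$ and $2$, each moment is simply a prescribed linear combination of $P^{*}_{n}(e_0;q;x)$, $P^{*}_{n}(e_1;q;x)$ and $P^{*}_{n}(e_2;q;x)$, so no new analytic input is required.

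For part (i), write $t-x=e_1(t)-x\,e_0(t)$, so that
\begin{equation*}
P^{*}_{n}(t-x;q;x)=P^{*}_{n}(e_1;q;x)-x\,P^{*}_{n}(e_0;q;x).
\end{equation*}
Inserting Lemma \ref{L1}(i) and (ii) and cancelling the leading $x$ against $-x\cdot 1$ leaves exactly
\begin{equation*}
P^{*}_{n}(t-x;q;x)=\frac{D_q(A(1))\,E_{q}^{-\frac{[n]_q x}{b_n}}e_{q}^{\,q\frac{[n]_q x}{b_n}}}{A(1)}\,\frac{b_n}{[n]_q},
\end{equation*}
which is the asserted identity.

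For part (ii), expand $(t-x)^2=e_2(t)-2x\,e_1(t)+x^2 e_0(t)$, whence
\begin{equation*}
P^{*}_{n}((t-x)^2;q;x)=P^{*}_{n}(e_2;q;x)-2x\,P^{*}_{n}(e_1;q;x)+x^2 P^{*}_{n}(e_0;q;x).
\end{equation*}
Now substitute the three formulas of Lemma \ref{L1}. The purely algebraic parts combine as $x^2-2x^2+x^2=0$, so every $x^2$-term drops out. Collecting the terms of order $b_n/[n]_q$ produces the common factor $E_{q}^{-[n]_q x/b_n}e_{q}^{q[n]_q x/b_n}\,b_n x/(A(1)[n]_q)$ multiplied by $\bigl(qD_q(A(q))+D_q(A(1))\bigr)-2D_q(A(1))=qD_q(A(q))-D_q(A(1))$, while the term of order $(b_n/[n]_q)^2$ carried by $P^{*}_{n}(e_2;q;x)$ is unaffected. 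This is precisely the stated expression.

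The argument is entirely computational; the only place that needs a little care is the sign bookkeeping in the coefficient of $b_n x/[n]_q$, where the contribution $-2D_q(A(1))$ from the middle term turns the $+D_q(A(1))$ inside the bracket of $P^{*}_{n}(e_2;q;x)$ into the $-D_q(A(1))$ that appears in the final formula. No estimates or limiting arguments enter at this point.
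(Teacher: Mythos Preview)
Your proof is correct and is exactly the intended derivation: the paper does not actually supply a proof of Lemma~\ref{L2}, but the lemma is simply the routine consequence of Lemma~\ref{L1} obtained by expanding $t-x$ and $(t-x)^2$ in the monomials $e_0,e_1,e_2$ and using linearity, which is precisely what you do. Your sign bookkeeping in the $b_nx/[n]_q$ coefficient is right, and nothing further is needed.
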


\section{Weighted approximation}
Let $B_{x^2}[0,\infty)$ be the set of functions defined on $[0,\infty)$ satisfying the condition $\mid f(x) \mid \leq
M_f(1+x^2)$, where $M_f$ is a constant depending on  $f$ only. By $C_{x^2}[0,\infty)$, we denote subspace of all continuous functions belonging to $B_{x^2}[0,\infty)$. Also, let $C_{x^2}^*[0,\infty)$ be the subspace of all $f\in
C_{x^2}[0,\infty)$ for which $\lim\limits_{x \rightarrow \infty}\frac{f(x)}{1+x^2}$ is finite. The norm on
$C_{x^2}^*[0,\infty)$ if $\|f\|_{x^2}=\sup\limits_{x\in[0,\infty)} \frac{\mid f(x)\mid}{1+x^2}$. For any positive number $a$, we define $$\omega_a(f,\delta)=\sup_{\mid t-x\mid \leq \delta}\sup_{x,t \in [0,a]} \mid f(t)-f(x)\mid,$$ and denote the usual modulus of continuity of $f$ on the closed interval $[0, a]$. We know that for a function $f \in C_{x^2}[0,\infty)$, the modulus of continuity $\omega_a(f,\delta)$ tends to zero.\\
Now, we shall discuss the weighted approximation theorem, when the approximation formula holds true on the interval $[0,\infty)$.

\begin{theorem}
For each $f \in C_{x^2}^{*}[0,\infty)$, we have
\begin{equation*}
\lim _{n\rightarrow \infty} \| P_{n}^{*}(f;q;x) - f \|_{x^2} = 0.\end{equation*}
\end{theorem}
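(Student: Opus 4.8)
The plan is to obtain this as a consequence of the weighted Korovkin-type theorem of Gadzhiev: if $(L_n)$ is a sequence of positive linear operators mapping $C_{x^2}^*[0,\infty)$ into $B_{x^2}[0,\infty)$ and
\[
\lim_{n\to\infty}\big\|L_n(e_\nu;\cdot)-e_\nu\big\|_{x^2}=0\qquad(\nu=0,1,2),
\]
then $\lim_{n\to\infty}\|L_n(f;\cdot)-f\|_{x^2}=0$ for every $f\in C_{x^2}^*[0,\infty)$. So the first thing to record is that each $P_n^{*}(\cdot\,;q;\cdot)$ is positive and linear and sends $C_{x^2}^*[0,\infty)$ into $B_{x^2}[0,\infty)$: if $|f(t)|\le M_f(1+t^2)$ then $|P_n^{*}(f;q;x)|\le M_f\big(1+P_n^{*}(e_2;q;x)\big)$, and by Lemma \ref{L1}(iii) the right-hand side is a quadratic polynomial in $x$, hence $\le M(1+x^2)$ for a suitable $M$. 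After this, everything reduces to checking the three moment conditions.

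For $\nu=0$, Lemma \ref{L1}(i) gives $P_n^{*}(e_0;q;x)\equiv1$, so $\|P_n^{*}(e_0;q;\cdot)-e_0\|_{x^2}=0$. For $\nu=1,2$ I would start from Lemma \ref{L2}: its first part gives
\[
\big|P_n^{*}(e_1;q;x)-x\big|=\frac{|D_q(A(1))|}{A(1)}\,E_q^{-\frac{[n]_q x}{b_n}}e_q^{\,q\frac{[n]_q x}{b_n}}\,\frac{b_n}{[n]_q},
\]
and its second part an analogous expression for $P_n^{*}((t-x)^2;q;x)$ with one term carrying the factor $b_nx/[n]_q$ and another carrying $b_n^2/[n]_q^2$. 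The mixed $q$-exponential factor is handled using the identity $e_q^{x}E_q^{-x}=1$ (equivalently the infinite-product representations of $e_q$ and $E_q$), which evaluates $E_q^{-t}e_q^{qt}=1-(1-q)t$ in closed form; in particular this factor is affine in $x$. Substituting $t=[n]_qx/b_n$ and dividing by $1+x^2$ then yields bounds of the shape
\[
\big\|P_n^{*}(e_1;q;\cdot)-e_1\big\|_{x^2}\le C\Big(\tfrac{b_n}{[n]_q}+(1-q)\Big),\qquad \big\|P_n^{*}(e_2;q;\cdot)-e_2\big\|_{x^2}\le C\Big(\tfrac{b_n}{[n]_q}+\tfrac{b_n^2}{[n]_q^2}+(1-q)\Big),
\]
with $C$ depending only on $A$. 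Since $b_n\to\infty$ and $b_n/n\to0$ force $b_n/[n]_q\to0$ (and, as is standard in this $q$-setting, $1-q\to 0$), all three quantities tend to $0$ and Gadzhiev's theorem gives the claim.

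The step I expect to be the main obstacle is this uniform-in-$x$ control of the error terms from Lemma \ref{L2}, i.e. estimating $\sup_{x\ge0}\frac{1}{1+x^2}E_q^{-[n]_qx/b_n}e_q^{q[n]_qx/b_n}$ and the corresponding weighted supremum of the $\nu=2$ remainder. Naively the $q$-exponential ratio is not bounded, so one really has to exploit the multiplicative structure of the $q$-exponentials to see that it grows only linearly in $x$; the weight $1+x^2$ in the denominator then absorbs this growth, while the prefactors $b_n/[n]_q$, $b_n^2/[n]_q^2$ (together with the $1-q$ term) drive the suprema to $0$ as $n\to\infty$. Everything else — positivity, linearity, the $\nu=0$ case, and the final appeal to the weighted Korovkin theorem — is routine.
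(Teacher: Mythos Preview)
Your approach is essentially the same as the paper's: both invoke Gadzhiev's weighted Korovkin theorem and verify $\lim_{n\to\infty}\|P_n^{*}(e_\nu;q;\cdot)-e_\nu\|_{x^2}=0$ for $\nu=0,1,2$ via the moment formulas of Lemmas~\ref{L1}--\ref{L2}. In fact you are more careful than the paper, which simply writes out the suprema and asserts they tend to zero; your identification of $E_q^{-t}e_q^{qt}=1-(1-q)t$ and the resulting explicit bounds (together with the implicit hypothesis $q=q_n\to1$ needed for $[n]_q\to\infty$) supply exactly the justification the paper omits.
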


\begin{proof}
Using the theorem in \cite{gz} we see that it is sufficient to verify the following three conditions
\begin{equation}\label{l6}
\lim_{n\rightarrow \infty} \| P_{n}^{*}(t^r;q;x)-x^r\|_{x^2}=0, \text{  } r=0,1,2.
\end{equation}
Since, $P_{n}^{*}(1,x)=1$, the first condition of (\ref{l6}) is satisfied for $r=0$. Now,
\begin{eqnarray*}
\|P_{n}^{*}(t;q;x)-x\|_{x^2}&=&\sup_{x\in [0,\infty)} \frac{\mid P_{n}^{*}(t;q;x)-x \mid}{1+x^2}\\
&\leq &  \sup_{x\in[0,\infty)}\bigg|\left(x + \frac{D_q(A(1)) E_{q}^{-\frac{[n]_q x}{b_n}} e_{q}^{ q \frac{[n]_q x}{b_n}}}{A(1)} \frac{b_n}{[n]_q}- x\right) \frac{1}{1+x^2}\bigg|\\
&\leq & \sup_{x\in[0,\infty)}\bigg|\left( \frac{D_q(A(1)) E_{q}^{-\frac{[n]_q x}{b_n}} e_{q}^{ q \frac{[n]_q x}{b_n}}}{A(1)} \frac{b_n}{[n]_q} \right) \frac{1}{1+x^2}\bigg|
\end{eqnarray*}
which implies that $$\|P_{n}^{*}(t,x)-x\|_{x^2}=0.$$
Finally,
\begin{eqnarray*}
\|P_{n}^{*}(t^2;q;x)-x^2\|_{x^2}&=& \sup_{x\in [0,\infty)} \frac{\mid P_{n}^{*}(t^2;q;x)-x^2 \mid}{1+x^2}\\
&\leq & \sup_{x\in [0,\infty)}\left| x^2 + \frac{ E_{q}^{-\frac{[n]_q x}{b_n}} e_{q}^{ q \frac{[n]_q x}{b_n}} [q D_q(A(q))+D_q(A(1)]}{A(1)} \frac{b_n x}{[n]_q} + \frac{D_q^2(A(1)) E_{q}^{-\frac{[n]_q x}{b_n}} e_{q}^{ q \frac{[n]_q x}{b_n}}}{A(1)} \frac{b_n^2}{[n]_q^2} - x^2\right|\frac{1}{1+x^2}\\
\end{eqnarray*}
which implies that
$\|P_{n}^{*}(t^2;q;x)-x^2\|_{x^2} \rightarrow 0$ as $[n]_q \rightarrow \infty.$
Thus proof is completed.
\end{proof}
We give the following theorem to approximate all functions in $C_{x^2}[0,\infty)$.

\begin{theorem}
For each $f\in C_{x^2}[0,\infty)$ and $\alpha>0$, we have
$\lim\limits_{[n]_q \rightarrow \infty} \sup\limits_{x\in[0,\infty)} \frac{\mid P_{n}^{*}(f;q;x)-f(x) \mid}{(1+x^2)^{1+\alpha}}=0.$
\end{theorem}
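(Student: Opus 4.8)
The plan is to reduce the weighted-$(1+x^2)^{1+\alpha}$ estimate to the ordinary weighted convergence already established in the previous theorem, together with a tail estimate that exploits the extra $\alpha$ in the denominator. First I would fix $\varepsilon>0$ and split the supremum over $[0,\infty)$ into a compact part $[0,x_0]$ and a tail $(x_0,\infty)$, where $x_0=x_0(\varepsilon)$ is chosen large. On the tail, one writes
\begin{equation*}
\frac{\mid P_{n}^{*}(f;q;x)-f(x)\mid}{(1+x^2)^{1+\alpha}}\leq \frac{\mid P_{n}^{*}(f;q;x)\mid}{(1+x^2)^{1+\alpha}}+\frac{\mid f(x)\mid}{(1+x^2)^{1+\alpha}},
\end{equation*}
and since $f\in C_{x^2}[0,\infty)$ gives $\mid f(x)\mid\leq M_f(1+x^2)$, the second term is $\leq M_f/(1+x^0^2)^{\alpha}$. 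For the first term one needs a uniform bound $P_{n}^{*}((1+t^2);q;x)\leq C(1+x^2)$ for all $n$ large; this follows from Lemma~\ref{L1}, because parts (i)--(iii) show $P_{n}^{*}(e_2;q;x)=x^2+o(1)\cdot x+o(1)$ with the $o(1)$ terms bounded uniformly in $n$ (the factors $E_{q}^{-[n]_qx/b_n}e_{q}^{q[n]_qx/b_n}$ are bounded and $b_n/[n]_q\to 0$). Hence $\mid P_{n}^{*}(f;q;x)\mid\leq M_f P_{n}^{*}(1+t^2;q;x)\leq C M_f(1+x^2)$, so the first tail term is $\leq C M_f/(1+x_0^2)^{\alpha}$. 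Choosing $x_0$ large makes the whole tail contribution $<\varepsilon/2$, uniformly in $n$.

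On the compact part $[0,x_0]$, I would use $(1+x^2)^{1+\alpha}\geq 1$ to bound the weighted quantity by the uniform error $\sup_{x\in[0,x_0]}\mid P_{n}^{*}(f;q;x)-f(x)\mid$. To control this I would invoke a quantitative Korovkin-type estimate on $[0,x_0]$: by the standard argument with the usual modulus of continuity $\omega_{x_0+1}(f,\delta)$ (defined in the text just before the theorem), one has for $x\in[0,x_0]$
\begin{equation*}
\mid P_{n}^{*}(f;q;x)-f(x)\mid\leq 2M_f(1+x_0^2)\,\frac{P_{n}^{*}((t-x)^2;q;x)}{\delta^2}+2\,\omega_{x_0+1}(f,\delta),
\end{equation*}
using that $f$ restricted to $[0,x_0+1]$ is continuous and that its global $(1+x^2)$-growth controls $f$ outside. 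By Lemma~\ref{L2}(iii), $P_{n}^{*}((t-x)^2;q;x)\to 0$ uniformly on $[0,x_0]$ as $[n]_q\to\infty$ (again because $b_n/[n]_q\to 0$ and the exponential factors are bounded). Picking $\delta$ small so that $2\,\omega_{x_0+1}(f,\delta)<\varepsilon/4$, and then $n$ large so that the first term is $<\varepsilon/4$, gives the compact contribution $<\varepsilon/2$. Combining the two parts yields the claim.

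The main obstacle is the uniform-in-$n$ tail bound: one must be sure that the "error" factors in Lemma~\ref{L1}, namely $D_q(A(1))$, $qD_q(A(q))+D_q(A(1))$, $D_q^2(A(1))$ together with $E_{q}^{-[n]_qx/b_n}e_{q}^{q[n]_qx/b_n}$, stay bounded by a constant independent of $n$ and $x$, so that $P_{n}^{*}(1+t^2;q;x)\leq C(1+x^2)$ holds with one fixed $C$ for all large $n$. Since $A$ is analytic near $1$ with $A(1)\neq 0$ its $q$-derivatives at $1$ and $q$ converge to the corresponding ordinary derivatives, hence are bounded; and the product of $q$-exponentials is bounded by the identity $e_q^{x}E_q^{-x}=1$ combined with monotonicity, so this is a routine but essential verification. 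Everything else is the classical compact-plus-tail decomposition, and the role of $\alpha>0$ is precisely to furnish the decaying factor $(1+x_0^2)^{-\alpha}$ that kills the tail uniformly in $n$.
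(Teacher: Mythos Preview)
Your proof is correct and follows the same compact-plus-tail decomposition as the paper: the paper likewise splits at a point $x_0$, bounds the tail via $\|f\|_{x^2}\,P_n^{*}(1+t^2;q;x)/(1+x^2)^{1+\alpha}+|f(x)|/(1+x^2)^{1+\alpha}$, and handles the compact part by invoking the later error-estimation Theorem~\ref{t2} (whose argument you have essentially reproduced inline). Your treatment of the tail is in fact tidier than the paper's, since you correctly isolate the need for a uniform-in-$n$ bound $P_n^{*}(1+t^2;q;x)\leq C(1+x^2)$, whereas the paper somewhat loosely asserts that $\sup_{x\geq x_0}P_n^{*}(1+t^2;q;x)/(1+x^2)^{1+\alpha}\to 0$ as $[n]_q\to\infty$.
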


\begin{proof}
For any fixed $x_0>0$,
\begin{eqnarray*}
\sup_{x\in[0,\infty)}\frac{\mid P_{n}^{*}(f;q;x)-f(x) \mid}{(1+x^2)^{1+\alpha}}&\leq& \sup_{x \leq x_0}\frac{\mid P_{n}^{*}(f,x)-f(x) \mid}{(1+x^2)^{1+\alpha}} + \sup_{x \geq x_0}\frac{\mid P_{n}^{*}(f;q;x)-f(x) \mid}{(1+x^2)^{1+\alpha}}\\
&\leq&  \|P_{n}^{*}(f;q;x)-f\|_{C[0,x_0]} + \|f\|_{x^2}\sup_{x \geq x_0}\frac{\mid P_{n}^{*}(1+t^2;q;x)\mid}{(1+x^2)^{1+\alpha}}+\sup_{x \geq x_0}\frac{\mid f(x) \mid}{(1+x^2)^{1+\alpha}}.
\end{eqnarray*}
The first term of the above inequality tends to zero from Theorem \ref{t2}. By Lemma \ref{L2} for any fixed $x_0>0$ it is easily seen that $ \sup_{x\geq x_0} \frac{\mid P_{n}^{*}(1+t^2;q;x)\mid}{(1+x^2)^{1+\alpha}}$ tends to zero as $ [n]_q \rightarrow \infty$. We can choose $x_0>0$ so large that the last part of the above inequality can be made small enough. Thus the proof is completed.
\end{proof}

\section{Error Estimation}
The usual modulus of continuity of $f$ on the closed interval $[0, b]$ is defined
by
$$\omega_b(f,\delta) =\sup_{|t-x|\leq\delta,\, x,t\in[0,b]}|f(t)-f(x)|,\,\,  b>0.$$
We first consider the Banach lattice, for a function $f\in E$, $ \lim_{\delta\rightarrow 0^+}\omega_b(f,q;\delta)=0,$
where
$$E:=\left\{f\in C[0,\infty):\lim_{x\rightarrow\infty}\frac{f(x)}{1+x^2}\,\, is\,\, finite \right\}.$$
The next theorem gives the rate of convergence of the operators $P_{n}^{*}(f,x)$ to  $f(x),$ for all $f \in E.$ 

\begin{theorem}  \label{t2}
Let $f\in E$ and $\omega_{b+1}(f,q;\delta)$, $0<q<1$ be its modulus of continuity on the
finite interval $[0,b+1]\subset[0,\infty)$, where $a>0$ then, we have
\begin{equation*}
\| P_{n}^{*}(f;q;x)-f \|_{C[0,b]} \leq
M_f(1+b^2)\delta_n(b)+2\omega_{b+1}\left(f,\sqrt{\delta_n(b)}\right).
\end{equation*}
\end{theorem}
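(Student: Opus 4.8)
The plan is to follow the standard Shisha--Mond type argument adapted to the local uniform estimate on $[0,b]$. First I would fix $x\in[0,b]$ and split the estimate according to whether $t$ is close to $x$ or far from it. For $|t-x|<\delta$ one uses the elementary property $|f(t)-f(x)|\leq \omega_{b+1}(f,q;\delta)$, valid since both $t$ and $x$ then lie in $[0,b+1]$. For $|t-x|\geq\delta$ one must control $|f(t)-f(x)|$ by something that the operator can integrate; here one invokes that $f\in E\subset C_{x^2}[0,\infty)$, so $|f(t)-f(x)|\leq M_f(2+x^2+t^2)$, and a routine manipulation shows this is at most $M_f(1+b^2)\bigl(1+\tfrac{(t-x)^2}{\delta^2}\bigr)$ times a constant, which together with the near-case gives the pointwise bound
\begin{equation*}
|f(t)-f(x)|\leq M_f(1+b^2)\frac{(t-x)^2}{\delta^2}+2\,\omega_{b+1}(f,q;\delta)\left(1+\frac{(t-x)^2}{\delta^2}\right).
\end{equation*}

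Next I would apply the positive linear operator $P_n^{*}(\cdot;q;x)$ to both sides. Using $P_n^{*}(e_0;q;x)=1$ from Lemma \ref{L1}(i) and the second central moment estimate from Lemma \ref{L2}, one obtains
\begin{equation*}
|P_n^{*}(f;q;x)-f(x)|\leq M_f(1+b^2)\frac{P_n^{*}((t-x)^2;q;x)}{\delta^2}+2\,\omega_{b+1}(f,q;\delta)\left(1+\frac{P_n^{*}((t-x)^2;q;x)}{\delta^2}\right).
\end{equation*}
Now I would define $\delta_n(b):=\sup_{x\in[0,b]}P_n^{*}((t-x)^2;q;x)$, so that $P_n^{*}((t-x)^2;q;x)\leq\delta_n(b)$ uniformly on $[0,b]$, and then make the classical choice $\delta=\sqrt{\delta_n(b)}$. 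With this choice the factor $\frac{P_n^{*}((t-x)^2;q;x)}{\delta^2}\leq 1$, the first term becomes $M_f(1+b^2)\delta_n(b)$, and the second becomes at most $2\omega_{b+1}\bigl(f,\sqrt{\delta_n(b)}\bigr)\cdot 2$; absorbing constants (or rather, a slightly more careful bookkeeping keeping $(1+1)=2$) yields exactly the stated inequality. Taking the supremum over $x\in[0,b]$ gives the $C[0,b]$ norm bound.

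The only genuinely delicate point is the passage from the crude growth bound $|f(t)-f(x)|\leq M_f(2+x^2+t^2)$ to a bound of the form $C\,M_f(1+b^2)\bigl(1+(t-x)^2/\delta^2\bigr)$: one needs $t^2$ to be dominated, for $|t-x|\geq\delta$ and $x\leq b$, by a multiple of $(1+b^2)(t-x)^2/\delta^2$, which follows from $t^2\leq 2(t-x)^2+2x^2\leq 2(t-x)^2+2b^2$ and then using $\delta\leq 1$ (so that $1\leq 1/\delta^2$) to fold the constant terms into the $(t-x)^2/\delta^2$ term — this is where one must be slightly careful that $\delta_n(b)\to 0$ guarantees $\delta=\sqrt{\delta_n(b)}\leq 1$ for large $n$, the regime in which the theorem is of interest. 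Everything else — applying linearity and positivity, and invoking Lemma \ref{L2} to see $\delta_n(b)\to 0$ because $b_n/[n]_q\to 0$ — is routine.
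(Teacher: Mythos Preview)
Your decomposition into $|t-x|<\delta$ versus $|t-x|\geq\delta$ introduces a genuine error, not merely a constant to be absorbed. In the far region you fold the constant part of $2+x^2+t^2\le 2+3b^2+2(t-x)^2$ into $(t-x)^2/\delta^2$ by using $(t-x)^2\geq\delta^2$; this forces the growth term in your pointwise bound to carry the factor $1/\delta^2$. After applying $P_n^{*}$ and choosing $\delta=\sqrt{\delta_n(b)}$, that first term becomes
\[
M_f(1+b^2)\,\frac{P_n^{*}((t-x)^2;q;x)}{\delta^2}\ \le\ M_f(1+b^2)\,\frac{\delta_n(b)}{\delta_n(b)}\ =\ M_f(1+b^2),
\]
a fixed constant, not $M_f(1+b^2)\delta_n(b)$. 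Your sentence ``the first term becomes $M_f(1+b^2)\delta_n(b)$'' is an arithmetic slip: the $\delta^2$ in the denominator cancels exactly the decay you want to keep, so the resulting bound does not tend to zero and the theorem is not proved.

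The paper's proof avoids this by splitting at $t\le b+1$ versus $t>b+1$, a cut that is independent of $\delta$. On the first piece the full modulus inequality $|f(t)-f(x)|\le(1+|t-x|/\delta)\,\omega_{b+1}(f,\delta)$ is used for \emph{all} $t\in[0,b+1]$, not only for $|t-x|<\delta$; on the second piece one has $t-x>1$, so the constants $2+3b^2$ are absorbed into $(t-x)^2$ itself, giving $|f(t)-f(x)|\le N_f(1+b^2)(t-x)^2$ with no $1/\delta^2$. Applying the operator then yields $N_f(1+b^2)P_n^{*}((t-x)^2;q;x)\le N_f(1+b^2)\delta_n(b)$ directly, while a Cauchy--Schwarz step on the linear term $P_n^{*}(|t-x|;q;x)\le[P_n^{*}((t-x)^2;q;x)]^{1/2}$ produces the clean factor $1+\sqrt{\delta_n(b)}/\delta=2$. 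The missing idea in your argument is precisely this: the cutoff separating the modulus region from the growth region must be a fixed positive number (here $1$), not the vanishing $\delta$, or else the second moment loses its $\delta_n(b)$ factor. (As a side remark, your quadratic modulus bookkeeping would in any case give $4\omega_{b+1}$ rather than $2\omega_{b+1}$; the first-order bound plus Cauchy--Schwarz is what delivers the constant $2$.)
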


\begin{proof}
The proof is based on the following inequality
\begin{equation}\label{t3}
\|P_{n}^{*}(f;q;x)-f \| \leq
M_f(1+b^2)P_{n}^{*}((t-x)^2,x)+
\left(1+\frac{P_{n}^{*}(|t-x|,x)}{\delta}\right)\omega_{b+1}(f,\delta).
\end{equation}
For all $(x,t)\in [0,b]\times[0,\infty):= S.$
To prove (\ref{t3}), we write\\
$$S=S_1\cup S_2:=\{(x,t):0\leq x\leq b,\, 0\leq t \leq b+1\}\cup\{(x,t):0\leq x\leq b,\,  t> b+1\}.$$
If $(x, t)\in S_1,$ we can write
\begin{equation}\label{o}
|f(t)-f(x)|\leq \omega_{b+1}(f,|t-x|)\leq\left(1+\frac{|t-x|}{\delta}\right)\omega_{b+1}(f,\delta)
\end{equation}
where $\delta > 0.$ On the other hand, if $(x, t)\in S_2,$ using the fact that $t-x > 1$,
we have
\begin{eqnarray}\label{o1}
|f(t)-f(x)|&\leq& M_f(1+x^2+t^2)\\
&\leq& M_f(1+3x^2+2(t-x)^2)\nonumber\\
&\leq& N_f(1+b^2)(t-x)^2\nonumber
\end{eqnarray}
where $N_f = 6M_f.$ Combining (\ref{o}) and (\ref{o1}), we get (\ref{t3}).
Now from (\ref{t3}) it follows that
\begin{eqnarray*}
|P_{n}^{*}(f;q;x)-f(x)|&\leq & N_f(1+b^2)P_{n}^{*}((t-x)^2;q;x) + \left(1+\frac{P_{n}^{*}(|t-x|,x)}{\delta}\right)\omega_{b+1}(f,\delta)\\
&\leq& N_f(1+b^2)P_{n}^{*}((t-x)^2,x) + \left(1+\frac{{[P_{n}^{*}((t-x)^2,x)]}^{1/2}}{\delta}\right)\omega_{b+1}(f,\delta).
\end{eqnarray*}
By Lemma \ref{L2}  we have
$$P_{n}^{*}(t-x)^2\leq\delta_n(b).$$
\begin{equation*}
\|P_{n}^{*}(f;q;x)-f \| \leq
N_f(1+b^2)\delta_n(b)+\left(1+\frac{\sqrt{\delta_n(b)}}{\delta}\right)\omega_{b+1}(f,\delta).
\end{equation*}
Choosing $\delta =\sqrt{\delta_n(b)},$ we get the desired estimation.
\end{proof}

\end{document}